\documentclass[11pt,reqno]{amsart}
\usepackage[percent]{overpic}
\usepackage{amsfonts,amssymb,amsmath,color}
\usepackage{hyperref}

\setlength{\unitlength}{1cm}

\DeclareMathOperator{\rank}{\mathrm{rank}} \DeclareMathOperator{\supp}{\mathrm{supp}}
\DeclareMathOperator{\sign}{\mathrm{sign}}

\newtheorem{theorem}{Theorem}

\newtheorem{proposition}[theorem]{Proposition}

\newtheorem{remark}[theorem]{Remark}
\newtheorem{example}[theorem]{Example}

\numberwithin{equation}{section} \numberwithin{theorem}{section}

\setcounter{MaxMatrixCols}{30}

\begin{document}

\title{Pattern Recognition on Oriented Matroids: Subtopes and Decompositions of (Sub)topes}

\author{Andrey O. Matveev}
\email{andrey.o.matveev@gmail.com}

\begin{abstract}
For a symmetric $2t$-cycle in the tope graph of a simple oriented matroid $\mathcal{M}$ on the ground set~$\{1,\ldots,t\}$, where $t$ is {\em even}, we describe decompositions of topes and subtopes of $\mathcal{M}$ with respect to the subtopes corresponding to the edges of the symmetric cycle.
\end{abstract}

\maketitle

\pagestyle{myheadings}

\markboth{PATTERN RECOGNITION ON ORIENTED MATROIDS}{A.O.~MATVEEV}

\thispagestyle{empty}



\section{Introduction}

Let $\mathcal{M}:=(E_t,\mathcal{T})$ be a {\em simple\/} oriented matroid, with set of {\em topes\/}~$\mathcal{T}\subseteq\{1,-1\}^t$, on the {\em ground set\/} $E_t:=[t]:=[1,t]:=\{1,\ldots,t\}$, where $t\geq 4$. See, e.g.,~\cite{BK,BLSWZ,Bo,BS,DRS,K,S,Z} on {\em oriented matroids}. Using a nonstandard terminology,
by ``simple'' we mean that the oriented matroid~$\mathcal{M}$ has no loops, parallel or {\em antiparallel\/} elements. We regard the
set of {\em covectors\/} $\mathcal{L}\subseteq\{1,-1,0\}^t$ of $\mathcal{M}$
as a set of row vectors of the real Euclidean space~$\mathbb{R}^t$.

Let $\mathrm{T}^{(+)}:=(1,\ldots,1)$ denote the $t$-dimensional row vector of all $1$'s; if the oriented matroid $\mathcal{M}$ is {\em acyclic}, then $\mathrm{T}^{(+)}$ is called the {\em positive tope}. For a subset~\mbox{$A\subseteq E_t$,} we denote by~${}_{-A}\mathrm{T}^{(+)}$ the tope $T$ whose {\em negative part\/}~$T^-:=\{e\in E_t\colon T(e)=-1\}$ is the set~$A$. If $s\in E_t$, then we write~${}_{-s}\mathrm{T}^{(+)}$ instead of~${}_{-\{s\}}\mathrm{T}^{(+)}$.

If $T'$ and $T''$ are {\em \textcolor{magenta}{adjacent} \textcolor{blue}{topes}\/} in the {\em tope graph\/} of $\mathcal{M}$ (i.e., the {\em Hamming \textcolor{magenta}{distance}\/} between the {\em \textcolor{blue}{words}\/} $T'$ and $T''$ is \textcolor{magenta}{$1$} or, equivalently, the standard {\em \textcolor{magenta}{scalar product}\/} $\langle T',T''\rangle$ of the {\em \textcolor{blue}{vectors}\/} $T'$ and $T''$ of the space $\mathbb{R}^t$ is \textcolor{magenta}{$t-2$}), then their common {\em \textcolor{magenta}{subtope}\/} $S\in\{1,-1,0\}^t$ defined as
the {\em \textcolor{magenta}{meet}\/}
\begin{equation*}
S:=T'\wedge T''
\end{equation*}
of the \textcolor{blue}{\em elements\/} $T'$ and $T''$ in the {\em big face lattice\/} of the oriented matroid~$\mathcal{M}$, can be interpreted as the {\em \textcolor{magenta}{midpoint}}
\begin{equation}
\label{eq:16}
S=\tfrac{1}{2}(T'+T'')
\end{equation}
of a straight {\em line segment\/} in~$\mathbb{R}^t$, with the {\em \textcolor{blue}{endpoints}\/} $T'$ and $T''$. From this viewpoint, if we let~$\mathbb{S}^{t-1}(r)$ denote the $(t-1)$-dimensional {\em sphere\/} of radius $r$ in $\mathbb{R}^t$, centered at the origin, then~$S$ is the {\em \textcolor{magenta}{point of tangency}\/} of the {\em tangent line\/} to the sphere $\mathbb{S}^{t-1}(\sqrt{t-1})$ passing through the {\em \textcolor{blue}{points}\/} $T'$ and~$T''$ that lie on the sphere~$\mathbb{S}^{t-1}(\sqrt{t})$.

The corresponding edge~$\{\widetilde{T}',\widetilde{T}''\}$ of the {\em hypercube graph\/} $\widetilde{\boldsymbol{H}}(t,2)$ on the vertex set $\{0,1\}^t$, where~$\widetilde{T}':=\tfrac{1}{2}(\mathrm{T}^{(+)}-T')$ and~$\widetilde{T}'':=\tfrac{1}{2}(\mathrm{T}^{(+)}-T'')$, could be labeled by the row~vector $\widetilde{S}:=\tfrac{1}{2}(\widetilde{T}'+\widetilde{T}'')=\tfrac{1}{2}(\mathrm{T}^{(+)}-S)\in\{0,1,\tfrac{1}{2}\}^t$.

A {\em symmetric cycle\/} $\boldsymbol{D}$ in the tope graph of $\mathcal{M}$ is defined to be its $2t$-cycle
with vertex sequence
\begin{equation}
\label{eq:17}
\mathrm{V}(\boldsymbol{D}):=(D^0,D^1,\ldots,D^{2t-1})
\end{equation}
such that
\begin{equation}
\label{eq:18}
D^{k+t}=-D^k\; ,\ \ \ 0\leq k\leq t-1\; .
\end{equation}
The sequence $\mathrm{V}(\boldsymbol{D})$ is a {\em maximal positive basis\/} of the space~$\mathbb{R}^t$,~see~\mbox{\cite[\S{}11.1]{M-PROM}}. We will see that if the cardinality $t$ of the ground set $E_t$ is {\em even}, then the set of subtopes, of the form~(\ref{eq:16}), associated with the edges of the cycle $\boldsymbol{D}$ is also a {\em maximal positive basis\/} of $\mathbb{R}^t$. We describe related (de)composition constructions for topes and subtopes of the oriented matroid~$\mathcal{M}$, and we give a detailed  example to illustrate them. In addition, we consider vertex decompositions in hypercube graphs with respect to the edges of their distinguished symmetric cycles.

\section{Symmetric cycles in tope graphs:\\ vertices, edges and (sub)topes}

Given a symmetric cycle $\boldsymbol{D}:=(D^0,D^1,\ldots,D^{2t-1},D^0)$ in the tope graph of a simple oriented matroid~$\mathcal{M}:=(E_t,\mathcal{T})$, with the sequence~$\mathrm{V}(\boldsymbol{D})$ of its vertices~(\ref{eq:17})(\ref{eq:18}), we denote by~$\mathcal{E}(\boldsymbol{D})$ the edge sequence of the cycle~$\boldsymbol{D}$:
\begin{equation*}
\mathcal{E}(\boldsymbol{D}):=
\bigl(\{D^0,D^1\},\{D^1,D^2\},\ldots,\{D^{2t-2},D^{2t-1}\},\{D^{2t-1},D^0\}\bigr)\; .
\end{equation*}

Let $\mathcal{S}\subset\{1,-1,0\}^t$ denote the set of subtopes of the oriented matroid~$\mathcal{M}$. By means of the map
\begin{equation}
\label{eq:1}
\mathcal{E}(\boldsymbol{D})\to\mathcal{S}\; ,\ \ \ \{D',D''\}\mapsto D'\wedge D''=\tfrac{1}{2}(D'+D'')\; ,
\end{equation}
we associate with the edge sequence of the cycle $\boldsymbol{D}$ the corresponding sequence of subtopes
$\mathrm{S}(\boldsymbol{D})$:
\begin{multline}
\label{eq:2}
\mathrm{S}(\boldsymbol{D}):=
\bigl(S^0:=D^0\wedge D^1,\ S^1:=D^1\wedge D^2,\ \ldots,\\
S^{2t-2}:=D^{2t-2}\wedge D^{2t-1},\ S^{2t-1}:=D^{2t-1}\wedge D^0\bigr)\; ,
\end{multline}
where we have
\begin{equation*}
S^{k+t}=-S^k\; ,\ \ \ 0\leq k\leq t-1\; .
\end{equation*}

Recall that the sequence of topes $(D^0,D^1,\ldots,D^{t-1})$ is a {\em basis\/} of the space~$\mathbb{R}^t$.
Looking at the matrix expression
\begin{equation}
\label{eq:9}
\underbrace{
\left(\begin{smallmatrix}
S^0\\
S^1\\
S^2\vspace{-2mm}\\
\vdots\\
S^{t-3}\\
S^{t-2}\\
S^{t-1}
\end{smallmatrix}\right)}_{\mathbf{W}:=\mathbf{W}(\boldsymbol{D})}=
\frac{1}{2}\;\underbrace{\left(\begin{smallmatrix}
1&1&0&\cdots&0&0&0\\
0\vspace{-2mm}&1&1&\cdots&0&0&0\\
\vdots&\vdots&\vdots&\cdots&\vdots&\vdots&\vdots\\
0&0&0&\cdots&1&1&0\\
0&0&0&\cdots&0&1&1\\
-1&0&0&\cdots&0&0&1
\end{smallmatrix}\right)}_{\boldsymbol{N}(t)\in\mathbb{R}^{t\times t}}\cdot\underbrace{\left(\begin{smallmatrix}
D^0\\
D^1\\
D^2\vspace{-2mm}\\
\vdots\\
D^{t-3}\\
D^{t-2}\\
D^{t-1}
\end{smallmatrix}\right)}_{\mathbf{M}:=\mathbf{M}(\boldsymbol{D})}\; ,
\end{equation}
we see that this relation
\begin{equation*}
\mathbf{W}=\tfrac{1}{2}\boldsymbol{N}(t)\cdot\mathbf{M}
\end{equation*}
implies that
\begin{equation*}
\rank\mathbf{W}=\rank\boldsymbol{N}(t)
=\begin{cases}
t-1\; , & \text{if $t$ is odd\; ,}\\
t\; , & \text{if $t$ is even\; .}
\end{cases}
\end{equation*}
Thus, if $t$ is {\em even}, then the sequence of subtopes $(S^0,S^1,\ldots,S^{t-1})$ is a {\em basis\/} of the space~$\mathbb{R}^t$, the sequence $\mathrm{S}(\boldsymbol{D})$ defined by~(\ref{eq:2}) is a {\em maximal positive basis\/} of $\mathbb{R}^t$, and we have
\begin{equation}
\label{eq:3}
\mathbf{M}=:\boldsymbol{P}(t)\cdot\mathbf{W}\; ,
\end{equation}
where the $(i,j)$th entries of the {\em Toeplitz matrix\/} (see, e.g.,~\cite{N})
\begin{equation*}
\boldsymbol{P}(t):=2\boldsymbol{N}(t)^{-1}\in\mathbb{R}^{t\times t}
\end{equation*}
are
\begin{equation}
\label{eq:8}
\begin{cases}
(-1)^{i+j}\; , & \text{if $i\leq j$\; ,}\\
(-1)^{i+j+1}\; , & \text{if $i>j$\; ,}
\end{cases}
\end{equation}
that is, we have
\begin{equation*}
\underbrace{\left(\begin{smallmatrix}
D^0\\
D^1\\
D^2\vspace{-2mm}\\
\vdots\\
D^{t-3}\\
D^{t-2}\\
D^{t-1}
\end{smallmatrix}\right)}_{\mathbf{M}
}=
\underbrace{\left(\begin{smallmatrix}
1&-1&1&\cdots&-1&1&-1\\
1\vspace{-2mm}&1&-1&\cdots&1&-1&1\\
\vdots&\vdots&\vdots&\cdots&\vdots&\vdots&\vdots\\
1&-1&1&\cdots&1&-1&1\\
-1&1&-1&\cdots&1&1&-1\\
1&-1&1&\cdots&-1&1&1
\end{smallmatrix}\right)}_{\boldsymbol{P}(t)
}\cdot
\underbrace{
\left(\begin{smallmatrix}
S^0\\
S^1\\
S^2\vspace{-2mm}\\
\vdots\\
S^{t-3}\\
S^{t-2}\\
S^{t-1}
\end{smallmatrix}\right)}_{\mathbf{W}
}\; .
\end{equation*}
For example, the matrices $\boldsymbol{P}(4)$ and $\boldsymbol{P}(6)$ are
\begin{equation*}
\boldsymbol{P}(4)=
\left(\begin{smallmatrix}
1&-1&1&-1\\
1&1&-1&1\\
-1&1&1&-1\\
1&-1&1&1
\end{smallmatrix}\right)
\ \ \ \text{and}\ \ \
\boldsymbol{P}(6)=
\left(\begin{smallmatrix}
1&-1&1&-1&1&-1\\
1&1&-1&1&-1&1\\
-1&1&1&-1&1&-1\\
1&-1&1&1&-1&1\\
-1&1&-1&1&1&-1\\
1&-1&1&-1&1&1
\end{smallmatrix}\right)\; .
\end{equation*}

Given a vector~$\boldsymbol{z}:=(z_1,\ldots,z_t)\in\mathbb{R}^t$, we denote its {\em support\/} $\{e\in E_t\colon$ $z_e\neq 0\}$
by~$\supp(\boldsymbol{z})$.

Since the entries of matrices $\boldsymbol{P}(t)$ belong to the set~\mbox{$\{1,-1\}$}, the rows
of these matrices can be viewed as vertices of the {\em hypercube graph\/} $\boldsymbol{H}(t,2)$ on the vertex set~$\{1,-1\}^t$:
\begin{remark}
If $\mathcal{H}:=(E_t,\{1,-1\}^t)$ is the oriented matroid realizable as the {\em arrangement\/} of {\em coordinate hyperplanes\/} in $\mathbb{R}^t$ {\rm(}see, e.g.,~\cite[Example~4.1.4]{BLSWZ}{\rm)}, where $t$ is {\em even}, then the rows $\boldsymbol{P}(t)^i=:P^{i-1}$, $1\leq i\leq t$, of the {\em nonsingular\/}
matrix
\begin{equation}
\label{eq:7}
\boldsymbol{P}(t):=
\left(\begin{smallmatrix}
P^0\\
P^1\\
P^2\vspace{-2mm}\\
\vdots\\
P^{t-3}\\
P^{t-2}\\
P^{t-1}
\end{smallmatrix}\right)\in\{1,-1\}^{t\times t}
\end{equation}
with entries~{\rm(\ref{eq:8})} constitute a {\em distinguished} sequence of certain $t$ topes of the oriented matroid~$\mathcal{H}$ for which, on the one hand, we have
\begin{equation}
\label{eq:10}
\boldsymbol{P}(t)=\mathbf{M}(\boldsymbol{D})\cdot\mathbf{W}(\boldsymbol{D})^{-1}\; ,
\end{equation}
for {\em any\/} symmetric cycle $\boldsymbol{D}$ in the {\em hypercube graph\/} $\boldsymbol{H}(t,2)$ of {\em topes\/} of the oriented matroid~$\mathcal{H}$.

On the other hand, for each {\em row}~$\boldsymbol{P}(t)^i=:P^{i-1}$, $1\leq i\leq t$, of the matrix~{\rm(\ref{eq:7})} with entries~{\rm(\ref{eq:8})}, regarded as a {\em vertex} of the hypercube graph~$\boldsymbol{H}(t,2)$ on the vertex set~$\{1,-1\}^t$,  there exists a {\em unique\/} row vector $\boldsymbol{x}^{i-1}:=\boldsymbol{x}^{i-1}(P^{i-1})$ $:=\boldsymbol{x}^{i-1}(P^{i-1},\boldsymbol{D})\in\{-1,0,1\}^t$, with $|\supp(\boldsymbol{x}^{i-1})|$ {\em odd}, such
that
\begin{equation*}
P^{i-1}=\boldsymbol{x}^{i-1}\mathbf{M}(\boldsymbol{D})\; ,
\end{equation*}
see~{\rm\cite[\S{}11.1]{M-PROM}}. Thus, we have
\begin{equation}
\label{eq:11}
\boldsymbol{P}(t)=
\underbrace{
\left(\begin{smallmatrix}
\boldsymbol{x}^0\\
\boldsymbol{x}^1\\
\boldsymbol{x}^2\vspace{-2mm}\\
\vdots\\
\boldsymbol{x}^{t-3}\\
\boldsymbol{x}^{t-2}\\
\boldsymbol{x}^{t-1}
\end{smallmatrix}\right)
}_{\mathbf{X}(\boldsymbol{P}(t),\boldsymbol{D})}\cdot\;\mathbf{M}(\boldsymbol{D})\; .
\end{equation}

Relations~{\rm(\ref{eq:10})} and~{\rm(\ref{eq:11})} yield
\begin{equation*}
\mathbf{M}(\boldsymbol{D})\cdot\mathbf{W}(\boldsymbol{D})^{-1}=\mathbf{X}(\boldsymbol{P}(t),\boldsymbol{D})\cdot\mathbf{M}(\boldsymbol{D})\; ,
\end{equation*}
for any symmetric cycle~$\boldsymbol{D}$ of the hypercube graph~$\boldsymbol{H}(t,2)$.
\end{remark}

\section{Vertex decompositions and edge decompositions in tope graphs with respect to the edges of their symmetric cycles}

In this section we discuss
(de)composition constructions related to the edge sequences of symmetric cycles in the tope graphs of simple oriented matroids. The results and their proofs are accompanied by a detailed example.

\begin{proposition}
\label{prop:1}
Let $\boldsymbol{D}:=(D^0,D^1,\ldots,D^{2t-1},D^0)$ be a symmetric cycle in the tope graph of a simple oriented matroid $\mathcal{M}:=(E_t,\mathcal{T})$, where $t$ is {\em even}. Let $\mathrm{S}(\boldsymbol{D})$ be the corresponding sequence of subtopes defined by~{\rm(\ref{eq:2})}.
\begin{itemize}
\item[\rm(i)] If $T\in\mathcal{T}$ is a {\em tope} of $\mathcal{M}$, then there exist a {\em unique} subset of subtopes $\overline{\boldsymbol{Q}}(T,\mathrm{S}(\boldsymbol{D}))\subset\mathrm{S}(\boldsymbol{D})$ and a {\em unique} set of {\em integers}~$\{\lambda_{Q'}\colon$ $Q'\in\overline{\boldsymbol{Q}}(T,\mathrm{S}(\boldsymbol{D}))\}$ such that
    \begin{equation}
    \label{eq:5}
    \sum_{Q'\in\overline{\boldsymbol{Q}}(T,\mathrm{S}(\boldsymbol{D}))}\lambda_{Q'}\cdot Q'=T\; ,
    \end{equation}
    where
    \begin{equation*}
    |\;\overline{\boldsymbol{Q}}(T,\mathrm{S}(\boldsymbol{D}))\;|=t\; ,
    \end{equation*}
and
    \begin{equation*}
    1\leq\lambda_{Q'}\leq t-1\; ,\ \ \ \ \text{and\ \ \ \ $\lambda_{Q'}$ are all {\em odd}}\; .
    \end{equation*}

    \item[\rm(ii)] If $S\in\mathcal{S}$ is a {\em subtope} of $\mathcal{M}$, then there exist a {\em unique inclusion-minimal} subset of subtopes $\overline{\boldsymbol{Q}}(S,\mathrm{S}(\boldsymbol{D}))\subset\mathrm{S}(\boldsymbol{D})$ and a {\em unique} set of {\em integers}~$\{\lambda_{Q'}\colon Q'\in\overline{\boldsymbol{Q}}(S,\mathrm{S}(\boldsymbol{D}))\}$ such that
    \begin{equation}
    \label{eq:6}
    \sum_{Q'\in\overline{\boldsymbol{Q}}(S,\mathrm{S}(\boldsymbol{D}))}\lambda_{Q'}\cdot Q'=S\; ,
    \end{equation}
    where
    \begin{equation*}
    1\leq\lambda_{Q'}\leq t-1\; .
    \end{equation*}
\end{itemize}
\end{proposition}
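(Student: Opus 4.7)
The plan is to combine the matrix identity~\eqref{eq:3}, $\mathbf{M}(\boldsymbol{D})=\boldsymbol{P}(t)\cdot\mathbf{W}(\boldsymbol{D})$, with the result recalled in~\cite[\S{}11.1]{M-PROM} that for every tope $T\in\mathcal{T}$ there is a unique row vector $\boldsymbol{x}(T,\boldsymbol{D})\in\{-1,0,1\}^t$ with $|\supp(\boldsymbol{x}(T,\boldsymbol{D}))|$ odd such that $T=\boldsymbol{x}(T,\boldsymbol{D})\cdot\mathbf{M}(\boldsymbol{D})$. Substituting one into the other gives $T=\bigl(\boldsymbol{x}(T,\boldsymbol{D})\cdot\boldsymbol{P}(t)\bigr)\cdot\mathbf{W}(\boldsymbol{D})$, so the coordinates of $T$ in the basis $(S^0,\ldots,S^{t-1})$ are the entries of the row vector $\boldsymbol{\nu}:=\boldsymbol{x}(T,\boldsymbol{D})\cdot\boldsymbol{P}(t)$; the whole argument will amount to reading off the required decompositions from $\boldsymbol{\nu}$, exploiting $S^{k+t}=-S^k$.

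For part~(i), each entry $\nu_j$ is a signed sum of $|\supp(\boldsymbol{x}(T,\boldsymbol{D}))|$ of the $\pm 1$ entries of $\boldsymbol{P}(t)$, hence an odd integer with $|\nu_j|\le|\supp(\boldsymbol{x}(T,\boldsymbol{D}))|$. Since $t$ is even while $|\supp(\boldsymbol{x}(T,\boldsymbol{D}))|$ is odd, this support size is at most $t-1$, so every $\nu_j$ is a nonzero odd integer with $1\le|\nu_j|\le t-1$. I would then define $\overline{\boldsymbol{Q}}(T,\mathrm{S}(\boldsymbol{D}))$ by including $S^j$ with $\lambda_{S^j}:=\nu_j$ whenever $\nu_j>0$, and $S^{j+t}$ with $\lambda_{S^{j+t}}:=-\nu_j$ whenever $\nu_j<0$; nonvanishing of every $\nu_j$ forces $|\overline{\boldsymbol{Q}}|=t$, and~\eqref{eq:5} holds with all the stipulated constraints. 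For uniqueness, any $\overline{\boldsymbol{Q}}\subset\mathrm{S}(\boldsymbol{D})$ of size $t$ that contained some antipodal pair $\{S^j,S^{j+t}\}$ would by counting have to omit some other antipodal pair entirely, hence give a zero coordinate in the basis expansion, contradicting the nonvanishing of every $\nu_j$; so $\overline{\boldsymbol{Q}}$ is antipodal-free and the coefficients $\lambda_{Q'}$ are then read off from~$\boldsymbol{\nu}$.

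For part~(ii), write $S=T'\wedge T''=\tfrac{1}{2}(T'+T'')$ and set $\boldsymbol{y}(S,\boldsymbol{D}):=\tfrac{1}{2}\bigl(\boldsymbol{x}(T',\boldsymbol{D})+\boldsymbol{x}(T'',\boldsymbol{D})\bigr)\in\{-1,-\tfrac{1}{2},0,\tfrac{1}{2},1\}^t$, so that $S=\boldsymbol{y}(S,\boldsymbol{D})\cdot\mathbf{M}(\boldsymbol{D})$ and the basis coordinates of $S$ are $\boldsymbol{\mu}:=\boldsymbol{y}(S,\boldsymbol{D})\cdot\boldsymbol{P}(t)$. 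The crux, and in my view the main obstacle, is showing that each $\mu_j$ is an \emph{integer}: the entries of $\boldsymbol{y}(S,\boldsymbol{D})$ equal to $\pm\tfrac{1}{2}$ correspond exactly to coordinates in $\supp(\boldsymbol{x}(T',\boldsymbol{D}))\triangle\supp(\boldsymbol{x}(T'',\boldsymbol{D}))$, and this symmetric difference has even cardinality because both supports are odd, so the $\pm\tfrac{1}{2}$ contributions to $\mu_j$ pair up into an integer; the bound $|\mu_j|\le t-1$ then follows from the triangle inequality and the bound established for topes in part~(i). Applying the sign-splitting of part~(i) to the integer vector $\boldsymbol{\mu}$ yields a set $\overline{\boldsymbol{Q}}(S,\mathrm{S}(\boldsymbol{D}))$ of cardinality equal to the number of nonzero $\mu_j$; any strictly smaller representation would contradict the linear independence of $(S^0,\ldots,S^{t-1})$, while any representation of the same size containing an antipodal pair could be shrunk by cancellation, yielding both the inclusion-minimality of $\overline{\boldsymbol{Q}}(S,\mathrm{S}(\boldsymbol{D}))$ and the uniqueness asserted in~\eqref{eq:6}.
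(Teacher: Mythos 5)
Your proposal is correct and follows essentially the same route as the paper: expand $T$ (or $S$) in the basis $(S^0,\ldots,S^{t-1})$ via $\mathbf{M}(\boldsymbol{D})=\boldsymbol{P}(t)\mathbf{W}(\boldsymbol{D})$ and the unique vector $\boldsymbol{x}\in\{-1,0,1\}^t$ of odd support from~\cite[\S{}11.1]{M-PROM}, then sign-split the resulting coordinate vector against the antipodal structure $S^{k+t}=-S^k$. The only cosmetic difference is in part~(ii), where you establish integrality of $\boldsymbol{\mu}$ by counting the $\pm\tfrac12$ entries of $\boldsymbol{y}$ via the parity of the symmetric difference of supports, whereas the paper gets the same conclusion more directly by noting that $\overline{\boldsymbol{x}}_{T'}$ and $\overline{\boldsymbol{x}}_{T''}$ both have all-odd integer entries (by part~(i)), so their average is integral; you also spell out the coefficient bounds and the uniqueness of the inclusion-minimal set, which the paper leaves implicit.
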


\begin{proof}
{\rm (i)} See Example~\ref{prop:2}(i).

Let $\boldsymbol{x}:=\boldsymbol{x}(T):=\boldsymbol{x}(T,\boldsymbol{D}):=(x_1,\ldots,x_t)\in\{-1,0,1\}^t$ be the unique
row vector such that
\begin{equation*}
T=\boldsymbol{x}\mathbf{M}(\boldsymbol{D})\; ,
\end{equation*}
where $|\supp(\boldsymbol{x})|$ is {\em odd}; see~\cite[\S{}11.1]{M-PROM}. In other words,
\begin{equation}
\label{eq:14}
T=\boldsymbol{x}\boldsymbol{P}(t)\mathbf{W}(\boldsymbol{D})=:\overline{\boldsymbol{x}}\,\mathbf{W}(\boldsymbol{D})\; ,
\end{equation}
where the vector $\overline{\boldsymbol{x}}:=\overline{\boldsymbol{x}}(T):=\overline{\boldsymbol{x}}(T,\mathrm{S}(\boldsymbol{D})):=
(\overline{x}_1,\ldots,\overline{x}_t)\in\mathbb{Z}^t$ is defined by
\begin{equation}
\label{eq:15}
\overline{\boldsymbol{x}}:=\boldsymbol{x}\boldsymbol{P}(t)\; .
\end{equation}
Since the entries of the matrix $\boldsymbol{P}(t)$ belong to the set~$\{1,-1\}$, the components of $\overline{\boldsymbol{x}}$ are all {\em odd\/} integers and, as a consequence,
$|\supp(\overline{\boldsymbol{x}})|=t$. We have
\begin{align*}
\overline{\boldsymbol{Q}}(T,\mathrm{S}(\boldsymbol{D}))&=
\{\sign(\overline{x}_i)\cdot S^{i-1}\colon 1\leq i\leq t\}\; ,\\
Q'&\in\{-S^{j-1},S^{j-1}\}\ \ \ \Longrightarrow\ \ \ \lambda_{Q'}=|\overline{x}_j|\; ,\ \ \ 1\leq j\leq t\; .
\end{align*}

{\rm (ii)} See Example~\ref{prop:2}(ii).

Let $T'$ and $T''$ be the two topes of $\mathcal{M}$ such that $S=\tfrac{1}{2}(T'+T'')$. If $\boldsymbol{x}_{T'}:=\boldsymbol{x}(T')\in\{-1,0,1\}^t$ and $\boldsymbol{x}_{T''}:=\boldsymbol{x}(T'')\in\{-1,0,1\}^t$ are the unique row vectors such that $T'=\boldsymbol{x}_{T'}\mathbf{M}(\boldsymbol{D})$ and $T''=\boldsymbol{x}_{T''}\mathbf{M}(\boldsymbol{D})$,
then we have
\begin{equation*}
\begin{split}
S&=\tfrac{1}{2}\bigl(\boldsymbol{x}_{T'}+\boldsymbol{x}_{T''}\bigr)\mathbf{M}(\boldsymbol{D})\\
&=\tfrac{1}{2}\bigl(\boldsymbol{x}_{T'}+\boldsymbol{x}_{T''}\bigr)\boldsymbol{P}(t)\mathbf{W}(\boldsymbol{D})
\; ,
\end{split}
\end{equation*}
or
\begin{equation*}
S=\overline{\boldsymbol{x}}\mathbf{W}(\boldsymbol{D})\; ,
\end{equation*}
where the vector $\overline{\boldsymbol{x}}:=\overline{\boldsymbol{x}}(S):=\overline{\boldsymbol{x}}(S,\mathrm{S}(\boldsymbol{D})):=
(\overline{x}_1,\ldots,\overline{x}_t)\in\mathbb{Z}^t$ is the vector
\begin{equation*}
\overline{\boldsymbol{x}}:=\tfrac{1}{2}\bigl(\overline{\boldsymbol{x}}_{T'}+\overline{\boldsymbol{x}}_{T''}\bigr)
=\tfrac{1}{2}\bigl(\boldsymbol{x}_{T'}+\boldsymbol{x}_{T''}\bigr)\boldsymbol{P}(t)\; .
\end{equation*}

Thus,
\begin{align*}
\overline{\boldsymbol{Q}}(S,\mathrm{S}(\boldsymbol{D}))&=
\{\sign(\overline{x}_{T',i}+\overline{x}_{T'',i})\cdot S^{i-1}\colon \overline{x}_{T'',i}\neq-\overline{x}_{T',i},\; 1\leq i\leq t\}\; ,\\
Q'&\in\{-S^{j-1},S^{j-1}\}\ \ \ \Longrightarrow\ \ \ \lambda_{Q'}=\tfrac{1}{2}|\overline{x}_{T',j}+\overline{x}_{T'',j}|\; ,\ \ \ 1\leq j\leq t\; .
\end{align*}
\end{proof}

\begin{figure}
\center{
\begin{overpic}[scale=1.0]{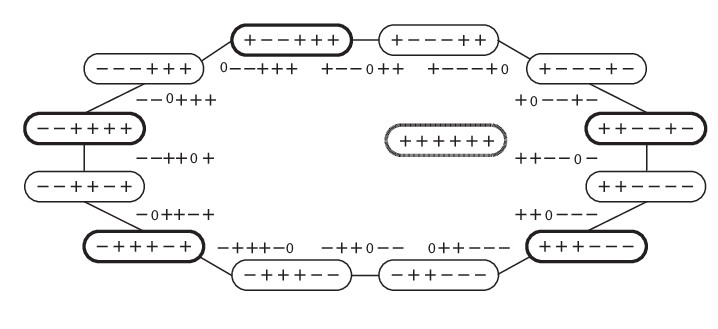}
\put(10,7.1){\makebox(0,0)[c]{$D^0=:Q^0$}}
\put(4,15.7){\makebox(0,0)[r]{$D^1$}}
\put(0.8,32){\makebox(0,0)[l]{$D^2=:Q^1$}}
\put(12,40){\makebox(0,0)[c]{$D^3$}}
\put(31,44.3){\makebox(0,0)[l]{$D^4=:Q^2$}}
\put(71,44){\makebox(0,0)[c]{$D^5$}}
\put(92.5,40.5){\makebox(0,0)[r]{$D^6$}}
\put(95,32){\makebox(0,0)[c]{$D^7=:Q^3$}}
\put(98,15.7){\makebox(0,0)[c]{$D^8$}}
\put(90,7.1){\makebox(0,0)[c]{$D^9=:Q^4$}}
\put(71,3.5){\makebox(0,0)[c]{$D^{10}$}}
\put(31,3.5){\makebox(0,0)[c]{$D^{11}$}}
\put(53,22){\makebox(0,0)[c]{$\textcolor{blue}{\mathrm{T}^{(+)}}$}}

\put(30.75,16){\makebox(0,0)[l]{$S^0$}}
\put(30.75,24){\makebox(0,0)[l]{$S^1$}}
\put(30.75,32){\makebox(0,0)[l]{$S^2$}}
\put(37,33){\makebox(0,0)[l]{$S^3$}}
\put(49,33){\makebox(0,0)[l]{$S^4$}}
\put(63,33){\makebox(0,0)[r]{$S^5$}}
\put(70.5,32){\makebox(0,0)[r]{$S^6$}}
\put(70,21){\makebox(0,0)[l]{$S^7$}}
\put(70.5,16){\makebox(0,0)[r]{$S^8$}}
\put(63,14){\makebox(0,0)[r]{$S^9$}}
\put(49,14){\makebox(0,0)[l]{$S^{10}$}}
\put(37,14){\makebox(0,0)[l]{$S^{11}$}}
\end{overpic}
}
\caption{A {\em symmetric cycle\/}
$\boldsymbol{D}:=(D^0,D^1,\ldots,D^{2t-1},D^0)$ in the {\em hypercube graph\/} $\boldsymbol{H}(t,2)$ on the vertex set~$\{1,-1\}^t$, where $t:=6$.
The edges of the cycle $\boldsymbol{D}$ are labeled by the corresponding {\em subtopes\/} of the oriented matroid $\mathcal{H}:=(E_6,\{1,-1\}^6)$; for example, the edge $\{D^0,D^1\}$ is labeled by the subtope $S^0:=(-1,0,1,1,-1,1)$. Here the {\em sign components\/} $1$, $-1$ and $0$ of {\em covectors\/} of $\mathcal{H}$ are substituted by the familiar symbols `$+$', `$-$' and~`0', respectively.
\newline
For the vertex~$\textcolor{blue}{\mathrm{T}^{(+)}}:=(1,1,1,1,1,1)$
of the graph~$\boldsymbol{H}(6,2)$, on the one hand, we have
$\textcolor{blue}{\mathrm{T}^{(+)}}=Q^0+Q^1+Q^2+Q^3$ $+\;Q^4$
for a {\em unique inclusion-minimal\/} subset~$\boldsymbol{Q}(\textcolor{blue}{\mathrm{T}^{(+)}},\boldsymbol{D})=:\{Q^0,\ldots,Q^4\}$ of five {\em topes\/} in the vertex sequence $\mathrm{V}(\boldsymbol{D})$ of the cycle~$\boldsymbol{D}$, where~$Q^0:=D^0$, $Q^1:=D^2$, $Q^2:=D^4$, $Q^3:=D^7$ and $Q^4:=D^9$; see~\cite[\S{}11.1]{M-PROM}.
\newline
On the other hand,~$\textcolor{blue}{\mathrm{T}^{(+)}}=S^1+S^2+5S^4+3S^6+3S^9+5S^{11}$ for the {\em unique\/}
set~$\overline{\boldsymbol{Q}}(\textcolor{blue}{\mathrm{T}^{(+)}},\mathrm{S}(\boldsymbol{D}))$ of
$t:=6$ {\em subtopes\/} $S^1$, $S^2$, $S^4$, $S^6$, $S^9$ and $S^{11}$ associated with edges of the cycle~$\boldsymbol{D}$, for which the corresponding {\em integer\/} coefficients are all {\em positive} and {\em odd}; see Proposition~\ref{prop:1}(i).
}
\label{fig:1}
\end{figure}

$\quad$

$\quad$

$\quad$

$\quad$

\newpage
\begin{example}
\label{prop:2}
Let $\boldsymbol{D}$ be a {\em symmetric cycle}, depicted in~Figure~{\rm\ref{fig:1}}, in the {\em hypercube graph\/} of {\em topes\/} of the oriented matroid $\mathcal{H}:=(E_6,\{1,-1\}^6)$.
The corresponding matrices $\mathbf{M}$ and $\mathbf{W}$ that describe {\em vertices\/} and {\em edges\/} of the cycle~$\boldsymbol{D}$ in the expression~{\rm(\ref{eq:9})} are as follows:
\begin{equation*}
\begin{split}
\mathbf{M}:=\mathbf{M}(\boldsymbol{D})=
\left(\begin{smallmatrix}
D^0\\
D^1\\
D^2\\
D^3\\
D^4\\
D^5
\end{smallmatrix}\right)=\left(\begin{smallmatrix}
-1&1&1&1&-1&1\\
-1&-1&1&1&-1&1\\
-1&-1&1&1&1&1\\
-1&-1&-1&1&1&1\\
1&-1&-1&1&1&1\\
1&-1&-1&-1&1&1
\end{smallmatrix}\right)\; ,\\
\mathbf{W}:=\mathbf{W}(\boldsymbol{D})=
\left(\begin{smallmatrix}
S^0\\
S^1\\
S^2\\
S^3\\
S^4\\
S^5
\end{smallmatrix}\right)=\left(\begin{smallmatrix}
-1&0&1&1&-1&1\\
-1&-1&1&1&0&1\\
-1&-1&0&1&1&1\\
0&-1&-1&1&1&1\\
1&-1&-1&0&1&1\\
1&-1&-1&-1&1&0
\end{smallmatrix}\right)\; .
\end{split}
\end{equation*}

\rm{(i)} Here we illustrate the proof of Proposition~\ref{prop:1}(i).

We would like to find the decomposition of the positive tope $\mathrm{T}^{(+)}$ $:=(1,1,1,1,1,1)$ with respect to
the set $\mathrm{S}(\boldsymbol{D})$ of subtopes~(\ref{eq:2}) associated with the edges of the cycle~$\boldsymbol{D}$; see~Figure~{\rm\ref{fig:1}}.

We have
\begin{equation*}
\boldsymbol{x}:=\boldsymbol{x}(\mathrm{T}^{(+)},\boldsymbol{D})=(1,-1,1,-1,1,0)
\end{equation*}
and
\begin{equation*}
\begin{split}
\overline{\boldsymbol{x}}:\!&=\overline{\boldsymbol{x}}(\mathrm{T}^{(+)},\mathrm{S}(\boldsymbol{D}))=\boldsymbol{x}\boldsymbol{P}(6)=
(1,-1,1,-1,1,0)\cdot
\left(\begin{smallmatrix}
1&-1&1&-1&1&-1\\
1&1&-1&1&-1&1\\
-1&1&1&-1&1&-1\\
1&-1&1&1&-1&1\\
-1&1&-1&1&1&-1\\
1&-1&1&-1&1&1
\end{smallmatrix}\right)\\&=(-3,1,1,-3,5,-5)\; .
\end{split}
\end{equation*}
Thus, we have
\begin{equation*}
\begin{split}
\overline{\boldsymbol{Q}}(\mathrm{T}^{(+)},\mathrm{S}(\boldsymbol{D}))&=
\{\sign(\overline{x}_i)\cdot S^{i-1}\colon 1\leq i\leq 6\}=
\{\underbrace{-S^0}_{S^6},S^1,S^2,\underbrace{-S^3}_{S^9},S^4,\underbrace{-S^5}_{S^{11}}\}\\&=\{S^1,S^2,S^4,S^6,S^9,S^{11}\}\; ,
\end{split}
\end{equation*}
that is,
\begin{equation*}
\begin{split}
\mathrm{T}^{(+)}:\!&=(1,1,1,1,1,1)\\&=
|\overline{x}_2|\cdot S^1+|\overline{x}_3|\cdot S^2+|\overline{x}_5|\cdot S^4+|\overline{x}_1|\cdot S^6+
|\overline{x}_4|\cdot S^9+|\overline{x}_6|\cdot S^{11}
\\&=
S^1 + S^2 + 5S^4 + 3S^6 + 3S^9 + 5S^{11}
\\&=
(-1,-1,1,1,0,1)+(-1,-1,0,1,1,1)+5\cdot(1,-1,-1,0,1,1)
\\ &\phantom{=}+3\cdot(1,0,-1,-1,1,-1)+3\cdot(0,1,1,-1,-1,-1)+5\cdot(-1,1,1,1,-1,0)\; ,
\end{split}
\end{equation*}
for the {\em unique\/} set~$\overline{\boldsymbol{Q}}(\mathrm{T}^{(+)},\mathrm{S}(\boldsymbol{D}))=\{S^1,S^2,S^4,S^6,S^9,S^{11}\}$
of $6$ subtopes associated with edges of the cycle~$\boldsymbol{D}$ for which the corresponding {\em integer\/} coefficients are all {\em positive} and {\em odd}.

\newpage
\rm{(ii)} Let us now illustrate the proof of Proposition~\ref{prop:1}(ii).

Consider the subtope
\begin{equation*}
S:=(-1,-1,0,1,-1,-1)
\end{equation*}
of the oriented matroid~$\mathcal{H}:=(E_6,\{1,-1\}^6)$ that corresponds to an edge~$\{T',T''\}$ of its hypercube graph of topes $\boldsymbol{H}(6,2)$, where
\begin{equation*}
T':=(-1,-1,1,1,-1,-1)\ \ \ \text{and}\ \ \ T'':=(-1,-1,-1,1,-1,-1)\; .
\end{equation*}
We have
\begin{equation*}
\begin{split}
S&=\underbrace{\tfrac{1}{2}\bigl(\overline{\boldsymbol{x}}_{T'}+\overline{\boldsymbol{x}}_{T''}\bigr)}_{\overline{\boldsymbol{x}}(S)}
\mathbf{W}(\boldsymbol{D})\\
&=\underbrace{\tfrac{1}{2}\bigl(\boldsymbol{x}_{T'}+\boldsymbol{x}_{T''}\bigr)
\boldsymbol{P}(6)}_{\overline{\boldsymbol{x}}(S)}\mathbf{W}(\boldsymbol{D})\; .
\end{split}
\end{equation*}
Since $\boldsymbol{x}_{T'}:=\boldsymbol{x}(T')=(-1,1,0,0,0,-1)$ and $\boldsymbol{x}_{T''}:=\boldsymbol{x}(T'')=(-1,1,-1,1,0,-1)$,
we have
\begin{equation*}
\begin{split}
\overline{\boldsymbol{x}}(S)&=\tfrac{1}{2}\bigl((-1,1,0,0,0,-1)+(-1,1,-1,1,0,-1)\bigr)\cdot\!
\left(\begin{smallmatrix}
1&-1&1&-1&1&-1\\
1&1&-1&1&-1&1\\
-1&1&1&-1&1&-1\\
1&-1&1&1&-1&1\\
-1&1&-1&1&1&-1\\
1&-1&1&-1&1&1
\end{smallmatrix}\right)\\&=
(-1,1,-\tfrac{1}{2},\tfrac{1}{2},0,-1)\cdot\!
\left(\begin{smallmatrix}
1&-1&1&-1&1&-1\\
1&1&-1&1&-1&1\\
-1&1&1&-1&1&-1\\
1&-1&1&1&-1&1\\
-1&1&-1&1&1&-1\\
1&-1&1&-1&1&1
\end{smallmatrix}\right)\\&=(0,2,-3,4,-4,2)\; .
\end{split}
\end{equation*}
We see that
\begin{equation*}
S:=(-1,-1,0,1,-1,-1)=2S^1-3\cdot\underbrace{S^2}_{-S^8}+4S^3-4\cdot\underbrace{S^4}_{-S^{10}}+2S^5
\end{equation*}
or, in other words,
\begin{equation*}
\begin{split}
S:\!&=(-1,-1,0,1,-1,-1)\\&=2S^1+4S^3+2S^5+3S^8+4S^{10}\\&=
2\cdot (-1,-1,1,1,0,1)+4\cdot (0,-1,-1,1,1,1)+2\cdot (1,-1,-1,-1,1,0)
\\&\phantom{=}+3\cdot (1,1,0,-1,-1,-1)+4\cdot (-1,1,1,0,-1,-1)\; ,
\end{split}
\end{equation*}
for the {\em unique inclusion-minimal\/} set~$\overline{\boldsymbol{Q}}(S,\mathrm{S}(\boldsymbol{D}))=\{S^1,S^3,S^5,S^8,S^{10}\}$
of subtopes associated with edges of the cycle~$\boldsymbol{D}$ for which the corresponding {\em integer\/} coefficients are all {\em positive}.
\end{example}

\section{Vertex decompositions in hypercube graphs with respect to the edges of their distinguished symmetric cycles}

Let $\boldsymbol{R}$ be a distinguished symmetric cycle in the {\em hypercube graph\/}~$\boldsymbol{H}(t,2)$ of {\em topes\/} of the oriented
matroid~$\mathcal{H}:=(E_t,\{1,-1\}^t)$, where $t$ is {\em even}, defined as follows:
\begin{equation}
\label{eq:12}
\begin{split}
R^0:\!&=\mathrm{T}^{(+)}\; ,\\
R^s:\!&={}_{-[s]}R^0\; ,\ \ \ 1\leq s\leq t-1\; ,
\end{split}
\end{equation}
and
\begin{equation}
\label{eq:13}
R^{k+t}:=-R^k\; ,\ \ \ 0\leq k\leq t-1\; .
\end{equation}
No matter how large the dimension $t$ of the {\em discrete hypercube\/} $\{1,-1\}^t$ is, four assertions of~\cite[Prop.~2.4]{M-SC-II} allow us to find the {\em linear algebraic\/}
decompositions
\begin{equation*}
T=\sum_{Q\in\boldsymbol{Q}(T,\boldsymbol{R})}Q
\end{equation*}
of vertices $T$ of the graph~$\boldsymbol{H}(t,2)$ on the vertex set~$\{1,-1\}^t$, by means of {\em inclusion-minimal\/} subsets~$\boldsymbol{Q}(T,\boldsymbol{R})\subset\mathrm{V}(\boldsymbol{R})$ of {\em odd\/} cardinality, in an {\em explicit\/} and {\em computation-free\/} way.

We now present a (subtope)
companion to~\cite[Prop.~2.4]{M-SC-II}. As earlier, $\boldsymbol{P}(t)^s$, $1\leq s\leq t$, denotes the $s$th row $P^{s-1}$ of the matrix $\boldsymbol{P}(t)$ given in~(\ref{eq:7}), with entries~\rm(\ref{eq:8}).

\begin{proposition}
Let $\boldsymbol{R}$ be the distinguished symmetric cycle, defined
by {\rm(\ref{eq:12})(\ref{eq:13})}, in the hypercube graph~$\boldsymbol{H}(t,2)$ on the vertex set~$\{1,-1\}^t$, where~$t$ is {\em even}.

Let $A$ be a nonempty subset of the ground set $E_t$, viewed as a {\em disjoint} union
\begin{equation*}
A=[i_1,j_1]\;\dot\cup\;[i_2,j_2]\;\dot\cup\;\cdots\;\dot\cup\;[i_{\varrho},j_{\varrho}]
\end{equation*}
of intervals of $E_t$, such that
\begin{equation*}
j_1+2\leq i_2,\ \ j_2+2\leq i_3,\ \ \ldots,\ \
j_{\varrho-1}+2\leq i_{\varrho}\; ,
\end{equation*}
for some $\varrho:=\varrho(A)$.

Consider the vector $\overline{\boldsymbol{x}}({}_{-A}\mathrm{T}^{(+)},\mathrm{S}(\boldsymbol{R}))\in\mathbb{Z}^t$ defined by
\begin{equation*}
{}_{-A}\mathrm{T}^{(+)}=:\overline{\boldsymbol{x}}({}_{-A}\mathrm{T}^{(+)},\mathrm{S}(\boldsymbol{R}))\cdot\mathbf{W}(\boldsymbol{R})\; ,
\end{equation*}
cf.~{\rm(\ref{eq:14})} and {\rm(\ref{eq:15})}.
\begin{itemize}
\item[\rm(i)]
If $\{1,t\}\cap A=\{1\}$, then
\begin{equation*}
\overline{\boldsymbol{x}}({}_{-A}\mathrm{T}^{(+)},\mathrm{S}(\boldsymbol{R}))=
\sum_{1\leq k\leq\varrho}\boldsymbol{P}(t)^{j_k+1}\ \ - \ \ \sum_{2\leq \ell\leq\varrho}\boldsymbol{P}(t)^{i_{\ell}}\; ,
\end{equation*}
that is, for a component $\overline{x}_e$ of this vector, where $e\in E_t$, we have
\begin{multline*}
\overline{x}_e({}_{-A}\mathrm{T}^{(+)},\mathrm{S}(\boldsymbol{R}))=
\sum_{1\leq k\leq\varrho}
\begin{cases}
(-1)^{e+j_k+1}\; , & \text{if $j_k<e$\; ,}\\
(-1)^{e+j_k}\; , & \text{if $j_k\geq e$}
\end{cases}\\
-\sum_{2\leq \ell\leq\varrho}
\begin{cases}
(-1)^{e+i_{\ell}}\; , & \text{if $i_{\ell}\leq e$\; ,}\\
(-1)^{e+i_{\ell}+1}\; , & \text{if $i_{\ell}>e$\; .}
\end{cases}
\end{multline*}

\item[\rm(ii)]
If $\{1,t\}\cap A=\{1,t\}$, then
\begin{equation*}
\overline{\boldsymbol{x}}({}_{-A}\mathrm{T}^{(+)},\mathrm{S}(\boldsymbol{R}))=-\boldsymbol{P}(t)^1\  + \  \sum_{1\leq k\leq\varrho-1}\boldsymbol{P}(t)^{j_k+1}\ \ - \ \
\sum_{2\leq\ell \leq\varrho}\boldsymbol{P}(t)^{i_{\ell}}\; ,
\end{equation*}
that is, for a component $\overline{x}_e$ of this vector we have
\begin{multline*}
\overline{x}_e({}_{-A}\mathrm{T}^{(+)},\mathrm{S}(\boldsymbol{R}))=
(-1)^e\ \ +\ \
\sum_{1\leq k\leq\varrho-1}
\begin{cases}
(-1)^{e+j_k+1}\; , & \text{if $j_k<e$\; ,}\\
(-1)^{e+j_k}\; , & \text{if $j_k\geq e$}
\end{cases}\\
-\sum_{2\leq \ell\leq\varrho}
\begin{cases}
(-1)^{e+i_{\ell}}\; , & \text{if $i_{\ell}\leq e$\; ,}\\
(-1)^{e+i_{\ell}+1}\; , & \text{if $i_{\ell}>e$\; .}
\end{cases}
\end{multline*}

\item[\rm(iii)]
If $|\{1,t\}\cap A|=0$, then
\begin{equation*}
\overline{\boldsymbol{x}}({}_{-A}\mathrm{T}^{(+)},\mathrm{S}(\boldsymbol{R}))=\boldsymbol{P}(t)^1\  + \  \sum_{1\leq k\leq\varrho}\boldsymbol{P}(t)^{j_k+1}\  - \
\sum_{1\leq\ell \leq\varrho}\boldsymbol{P}(t)^{i_{\ell}}\; ,
\end{equation*}
that is, for a component $\overline{x}_e$ of this vector we have
\begin{multline*}
\overline{x}_e({}_{-A}\mathrm{T}^{(+)},\mathrm{S}(\boldsymbol{R}))=
(-1)^{e+1}\ \ +\ \
\sum_{1\leq k\leq\varrho}
\begin{cases}
(-1)^{e+j_k+1}\; , & \text{if $j_k<e$\; ,}\\
(-1)^{e+j_k}\; , & \text{if $j_k\geq e$}
\end{cases}\\
-\sum_{1\leq \ell\leq\varrho}
\begin{cases}
(-1)^{e+i_{\ell}}\; , & \text{if $i_{\ell}\leq e$\; ,}\\
(-1)^{e+i_{\ell}+1}\; , & \text{if $i_{\ell}>e$\; .}
\end{cases}
\end{multline*}

\item[\rm(iv)]
If $\{1,t\}\cap A=\{t\}$, then
\begin{equation*}
\overline{\boldsymbol{x}}({}_{-A}\mathrm{T}^{(+)},\mathrm{S}(\boldsymbol{R}))=
\sum_{1\leq k\leq\varrho-1}\boldsymbol{P}(t)^{j_k+1}\ \ - \ \ \sum_{1\leq \ell\leq\varrho}\boldsymbol{P}(t)^{i_{\ell}}\; ,
\end{equation*}
that is, for a component $\overline{x}_e$ of this vector we have
\begin{multline*}
\overline{x}_e({}_{-A}\mathrm{T}^{(+)},\mathrm{S}(\boldsymbol{R}))=
\sum_{1\leq k\leq\varrho-1}
\begin{cases}
(-1)^{e+j_k+1}\; , & \text{if $j_k< e$\; ,}\\
(-1)^{e+j_k}\; , & \text{if $j_k\geq e$}
\end{cases}\\
-\sum_{1\leq \ell\leq\varrho}
\begin{cases}
(-1)^{e+i_{\ell}}\; , & \text{if $i_{\ell}\leq e$\; ,}\\
(-1)^{e+i_{\ell}+1}\; , & \text{if $i_{\ell}>e$\; .}
\end{cases}
\end{multline*}
\end{itemize}
\end{proposition}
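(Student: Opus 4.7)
The plan is to reduce the statement to the corresponding \emph{vertex} decomposition from~\cite[Prop.~2.4]{M-SC-II} and then pass to the subtope coordinates via the change of basis $\overline{\boldsymbol{x}} = \boldsymbol{x}\,\boldsymbol{P}(t)$ recorded in~(\ref{eq:15}).

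First, I would apply~\cite[Prop.~2.4]{M-SC-II}, whose four assertions are indexed by the same four cases of $\{1,t\}\cap A$ appearing here, to obtain the unique vector $\boldsymbol{x}:=\boldsymbol{x}({}_{-A}\mathrm{T}^{(+)},\boldsymbol{R})\in\{-1,0,1\}^t$ of odd support for which ${}_{-A}\mathrm{T}^{(+)} = \boldsymbol{x}\,\mathbf{M}(\boldsymbol{R})$. Using that $R^{s}$ has negative part $[s]$ for $1\leq s\leq t-1$ while $R^{s+t}=-R^s$, a direct check on a disjoint union of intervals shows that in each of the four cases $\boldsymbol{x}$ is a signed sum of the standard basis vectors $e_{j_k+1}$ (with coefficient~$+1$) and $e_{i_\ell}$ (with coefficient~$-1$), with the summation ranges exactly matching those in the statement of the present proposition, and with an additional term $+e_1$ in case~(iii), which is the only case in which neither $i_1 = 1$ nor $j_\varrho = t$ and in which such an extra basis vector is needed to keep $|\supp(\boldsymbol{x})|$ odd.

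Second, applying~(\ref{eq:15}) in the form $\overline{\boldsymbol{x}} = \boldsymbol{x}\,\boldsymbol{P}(t)$, and noting that $e_s\,\boldsymbol{P}(t) = \boldsymbol{P}(t)^s$, the vector-level identities asserted in parts~(i)--(iv) follow by linearity. Finally, the coordinate-wise descriptions are obtained by expanding each row $\boldsymbol{P}(t)^s$ at column $e$ via the Toeplitz formula~(\ref{eq:8}): for $s = j_k+1$ the inequality $s\leq e$ amounts to $j_k < e$, which produces the sign $(-1)^{e+j_k+1}$ in that case and $(-1)^{e+j_k}$ when $j_k\geq e$; the rows $\boldsymbol{P}(t)^{i_\ell}$ and (for case~(iii)) $\boldsymbol{P}(t)^1$ are read off analogously.

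The main obstacle is not computational but organizational. One must verify that the four case distinctions in the statement here correspond correctly to those of the cited proposition, and that the boundary endpoints $i_1 = 1$ and $j_\varrho = t$ are properly excluded from the relevant summations so that $|\supp(\boldsymbol{x})|$ has the correct odd parity. Once this bookkeeping is set, the rest of the argument is a mechanical application of the linear relation $\overline{\boldsymbol{x}} = \boldsymbol{x}\,\boldsymbol{P}(t)$ and the Toeplitz description~(\ref{eq:8}) of $\boldsymbol{P}(t)$.
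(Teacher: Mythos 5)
Your proposal is correct and matches the (implicit) argument the paper has in mind: the paper states this proposition as a ``companion'' to \cite[Prop.~2.4]{M-SC-II} without writing out a proof, and the intended route is exactly to take the vertex decomposition $\boldsymbol{x}$ with ${}_{-A}\mathrm{T}^{(+)}=\boldsymbol{x}\,\mathbf{M}(\boldsymbol{R})$ from that reference, pass to $\overline{\boldsymbol{x}}=\boldsymbol{x}\,\boldsymbol{P}(t)$ via~(\ref{eq:15}), and read off the entries from the Toeplitz formula~(\ref{eq:8}). One small caveat in wording: the extra term $+e_1$ in case~(iii) is forced by the requirement that $\boldsymbol{x}\,\mathbf{M}(\boldsymbol{R})$ actually lie in $\{1,-1\}^t$ (without it the combination has zero entries), and the odd support $|\supp(\boldsymbol{x})|=2\varrho+1$ is then a consequence rather than the reason for including it.
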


In particular, we have
\begin{equation*}
1\leq j<t\ \ \ \Longrightarrow\ \ \
\overline{\boldsymbol{x}}({}_{-[j]}\mathrm{T}^{(+)},\mathrm{S}(\boldsymbol{R}))=\boldsymbol{P}(t)^{j+1}\; ;
\end{equation*}
\begin{equation*}
\overline{\boldsymbol{x}}(\mathrm{T}^{(-)},\mathrm{S}(\boldsymbol{R}))=
-\overline{\boldsymbol{x}}(\mathrm{T}^{(+)},\mathrm{S}(\boldsymbol{R}))=-\boldsymbol{P}(t)^1\; ;
\end{equation*}
\begin{equation*}
1<i<j<t\ \ \ \Longrightarrow\ \ \
\overline{\boldsymbol{x}}({}_{-[i,j]}\mathrm{T}^{(+)},\mathrm{S}(\boldsymbol{R}))=\boldsymbol{P}(t)^1-\boldsymbol{P}(t)^i+\boldsymbol{P}(t)^{j+1}\; ;
\end{equation*}
\begin{equation*}
1<i\leq t\ \ \ \Longrightarrow\ \ \
\overline{\boldsymbol{x}}({}_{-[i,t]}\mathrm{T}^{(+)},\mathrm{S}(\boldsymbol{R}))=-\boldsymbol{P}(t)^i\; .
\end{equation*}
If $s\in E_t$, then for a row vector $\overline{\boldsymbol{y}}(s):=\overline{\boldsymbol{y}}(s;t)$ defined by
\begin{equation*}
\overline{\boldsymbol{y}}(s):=\overline{\boldsymbol{x}}({}_{-s}\mathrm{T}^{(+)},\mathrm{S}(\boldsymbol{R}))\; ,
\end{equation*}
we have
\begin{equation*}
\overline{\boldsymbol{y}}(s)=
\begin{cases}
\phantom{-}\boldsymbol{P}(t)^2\; , & \text{if $s=1$},\\
\phantom{-}\boldsymbol{P}(t)^1-\boldsymbol{P}(t)^s+\boldsymbol{P}(t)^{s+1}\; , & \text{if $1<s<t$},\\
-\boldsymbol{P}(t)^t\; , & \text{if $s=t$}.
\end{cases}
\end{equation*}

\vspace{5mm}
\end{document}